\newtheorem{thm}{Theorem}[section]
\newtheorem{lem}[thm]{Lemma}
\newcommand{\Aut}[1]{\mathrm{Aut}({#1})}
\begin{document}

\title{The chromatic number of the square of the $8$-cube}

\author{%
Janne~I.~Kokkala\footnote{Supported by Aalto ELEC Doctoral School, Nokia Foundation, 
Emil Aaltonen Foundation, and by Academy of Finland Project 289002.}\ \ 
and Patric~R.~J.~\"{O}sterg{\aa}rd\footnote{Supported in part by Academy of Finland Project 289002.}\\
\hspace*{5mm}\\
Department of Communications and Networking\\
Aalto University School of Electrical Engineering\\
P.O.\ Box 13000, 00076 Aalto, Finland
}
\date{}
\maketitle

\begin{abstract}
A cube-like graph is a Cayley graph for the elementary abelian group of order $2^n$.
In studies of the chromatic number of cube-like graphs, 
the $k$th power of the $n$-dimensional hypercube, $Q_n^k$, is frequently
considered. This coloring problem can be considered in the framework of
coding theory, as the graph $Q_n^k$ can be constructed with one vertex 
for each binary word of length $n$ and edges between vertices exactly when the 
Hamming distance between the corresponding words is at most $k$. 
Consequently, a proper coloring 
of $Q_n^k$ corresponds to a partition of the $n$-dimensional binary Hamming space 
into codes with minimum distance at least $k+1$. The smallest open case,
the chromatic number of $Q_8^2$, is here settled by finding a 13-coloring. 
Such 13-colorings with specific symmetries are further classified.
\end{abstract}

\section{Introduction}

A \emph{cube-like graph} is a Cayley graph for the elementary abelian group of order $2^n$. One of the
original motivations for studying cube-like graphs was the fact that they have 
only integer eigenvalues \cite{H75}. Cube-like graphs also form a generalization of the hypercube.

There has been a lot of interest in the chromatic number of cube-like graphs
\cite[Sect.\ 9.7]{JT95}. In the early studies, people realized that many types of such
graphs have a chromatic number that is a power of two \cite{DHLL90}. This observation
inspired work into one of the two main research directions that 
have emerged: Determine the spectrum of chromatic numbers of cube-like graphs.
Payan \cite{P92} showed that there are gaps in the spectrum by proving that 3 is 
not a possible chromatic number; he also found a cube-like graph with chromatic number 7,
disproving earlier conjectures that the chromatic number might always be 
a power of two. 

The other main research direction is that of determining the chromatic number for
specific families of cube-like graphs. The $n$-dimensional hypercube, also called the 
$n$-cube and denoted by $Q_n$, is the graph with one vertex for each binary word of
length $n$ and with an edge between two vertices 
exactly when the Hamming distance between the corresponding words is $1$. The $k$th power of 
a graph $\Gamma = (V,E)$ is the graph $\Gamma^k = (V',E')$, where $V'=V$ and in which two vertices
are adjacent exactly when their distance in $\Gamma$ is at most $k$. In the current work, 
we focus on the chromatic number of (the cube-like graph) $Q_n^k$, 
denoted by $\chi_{\bar{k}}(n)$. The chromatic 
number $\chi_{\bar{k}}(n)$ has been studied, for example, in \cite{KDP00,Z01,NDG02,O04} and 
is further motivated by the problem of scalability of certain optical networks \cite{W97}.

The value of $\chi_{\bar{2}}(n)$ has been determined for $n \leq 7$, and for $n = 8$ it has been
known that $13 \leq \chi_{\bar{2}}(8) \leq 14$, where the upper bound follows from 
\mbox{14-colorings} found independently by Hougardy \cite{Z01} and Royle 
\mbox{\cite[Section~9.7]{JT95}}. By finding a 13-coloring of $Q_8^2$, we shall here 
prove that $\chi_{\bar{2}}(8) = 13$. The result is obtained by computer search, where the search
space is reduced by prescribing symmetries. An exhaustive classification is further carried out
in the reduced search space. We also show that none of the colorings of $Q_8^2$ that were found
can occur as a subgraph in a $13$-coloring of $Q_9^2$.

The remainder of the paper is structured as follows. In Section~\ref{sec:definitions}, 
we review the relation between binary codes and the graph $Q_n^k$, give definitions,
and survey some old results. Properties of a putative $13$-coloring of $Q_8^2$ that are 
utilized in the computer search are discussed in Section~\ref{sec:8cube}. The method for 
computational classification is explained in Section~\ref{sec:algorithm}, the results 
are reported in Section~\ref{sec:results}, and a consistency check of the computational 
results is discussed in Section~\ref{sec:doublecount}. Finally, Section~\ref{sec:extend}
describes a method for searching for $13$-colorings of $Q_9^2$ starting from the available
$13$-colorings of $Q_8^2$.

\section{Binary codes and powers of the $n$-cube} 
\label{sec:definitions}

We have seen that the graphs $Q_n$ and $Q_n^k$ are conveniently defined via the
corresponding Hamming space. Similarly, the problem of studying the chromatic number
of these graphs benefits from a coding-theoretic framework. 

A \emph{binary code} of \emph{length} $n$ and \emph{size} $M$ is a subset of $\mathbb{Z}_2^n$
of size $M$. Since all codes in this work are binary, we frequently omit that term and just 
talk about codes. The elements of a code are called \emph{codewords}, and the 
\emph{minimum distance} of a code is the smallest Hamming distance between
any two distinct codewords. A binary code with length $n$, size $M$, 
and minimum distance at least $d$ is called an $(n,M,d)$ code. 
A binary code is called \emph{even} if the Hamming weight of all codewords is even. We denote the set of all even-weight binary words of length $n$ by $\mathbb{E}^n$ and the set of all odd-weight binary words of 
length $n$ by $\mathbb{O}^n$. Determining $A(n,d)$, the largest possible size 
of a code with given $n$ and $d$, is one of the main problems in combinatorial coding theory.

A proper coloring of $Q_n^k$ corresponds to a partition of $\mathbb{Z}_2^n$ into binary 
codes with minimum distance at least $k+1$. The maximum size 
of a color class is $A(n,k+1)$, which implies the lower bound 
\begin{equation}
\label{eq:lower}
\chi_{\bar{k}}(n) \geq \lceil 2^n / A(n,k+1) \rceil.
\end{equation}

For colorings of $Q_n^2$, general constructions \cite{W97,LMT88} give 
\begin{equation}
\label{eq:upper}
\chi_{\bar{2}}(n) \leq 2^{\lceil \log_2 (n+1) \rceil}.
\end{equation}
When $n=2^t-j$ where $j=1,2,3,4$, we have $A(n,3)=2^{2^t-t-j}$ because the $j-1$ times shortened 
Hamming code is optimal \cite{BB77}, so \eqref{eq:lower} and \eqref{eq:upper} coincide. 
With $n \leq 15$, $\chi_{\bar{2}}(n)$ is unknown for $n=8,9,10,11$. For 
these values of $n$, the values of $A(n,3)$ are $20$, $40$, $72$, and $144$, 
respectively \cite{BBMOS78,B80,OBK99}, and \eqref{eq:lower} yields the lower bounds 
$13$, $13$, $15$, and $15$, respectively.
The upper bound $\chi_{\bar{2}}(8) \leq 14$ follows from
$14$-colorings of $Q_8^2$ found independently by Hougardy in 1991 \cite{Z01} and Royle 
in 1993 \cite[Section~9.7]{JT95}. In this work, we shall show that 
$\chi_{\bar{2}}(8) = 13$. Recently, Lauri \cite{L16} reported a 
$14$-coloring of $Q_9^2$, which implies that $\chi_{\bar{2}}(9) \leq 14$. 

The concept of symmetry is essential for the results of the current study. 
Two binary codes are called \emph{equivalent} if one can be obtained from the other by 
a permutation of coordinates and addition of a word in $\mathbb{Z}_2^n$ to each codeword. 
The operations maintaining equivalence of binary codes are the isometries of the 
Hamming space $\mathbb{Z}_2^n$. For even-weight binary codes of length $n$, we require that 
the addition be carried out with even-weight words and denote the group of operations 
maintaining equivalence by $G_n$. 

The halved $n$-cube, $\frac{1}{2}Q_n$, is the graph over the words of $\mathbb{E}^n$ 
that has edges between any two vertices whose Hamming distance is $2$. 
It is well known that $Q_n^2$ is isomorphic to $\frac{1}{2} Q_{n+1}$: adding a parity bit to 
each word in $\mathbb{Z}_2^n$ gives an isomorphism. For $n \geq 4$, the automorphism group 
of $\frac{1}{2} Q_{n+1}$ has order $(n+1)! 2^n$. The automorphisms are precisely the operations 
maintaining equivalence of even binary codes. Further, an independent set in 
$\frac{1}{2} Q_{n+1}$ corresponds to an even binary code of length $n+1$ with minimum distance 
at least $4$. Therefore, it is convenient to use even binary codes of length $n+1$ when 
discussing colorings of the square of the $n$-cube. A proper coloring of $Q_{n}^2$ thus 
corresponds to a partition of $\mathbb{E}^{n+1}$ into even binary codes of minimum distance 
at least $4$. We call partitions of $\mathbb{E}^{n+1}$ and partitions of a 
subset of $\mathbb{E}^{n+1}$ that contain only codes with minimum distance 
at least $4$ \emph{admissible}.

For an element $g\in G_n$ and a codeword $c \in \mathbb{E}^n$, we use the notation $g c$ for 
$g$ acting on $c$. Further, for a code $C \subseteq \mathbb{E}^n$, we denote 
$g C = \{g c : c \in C\}$, and for a set of codes 
$\mathcal{C} \subseteq \mathcal{P}(\mathbb{E}^n)$ we denote 
$g\mathcal{C} = \{g C : C \in \mathcal{C}\}$. Two codes, $C$ and $D$, are equivalent if 
$C=g D$ for some $g \in G_n$. The automorphism group of a code $C$ is the group 
$\Aut{C} = \{g : g C=C\}$. The \emph{orbit} of a code $C \subseteq \mathbb{E}^n$ under a 
group $H \leq G_n$ is the set $\{h C : h \in H\}$. We call two partitions $\mathcal{C}$ 
and $\mathcal{C}'$ of $\mathbb{E}^n$ \emph{isomorphic} if $g\mathcal{C}=\mathcal{C}'$ for 
some $g\in G_n$. The automorphism group of a partition $\mathcal{C}$ is the group 
$\Aut{\mathcal{C}} = \{g : g\mathcal{C} = \mathcal{C}$\}.

The \emph{Hamming code} of length $7$ and size $16$ is the unique $(7,16,3)$ code 
(up to equivalence) code that is a subspace of the vector space $\mathbb{F}_2^n$. The 
extended Hamming code is the even $(8,16,4)$ code obtained by adding a parity bit to each 
codeword of the Hamming code. Adding another parity bit to each codeword ($0$ for all codewords) 
gives the doubly extended Hamming code, which is an even $(9,16,4)$ code.

\section{Partitions of $\mathbb{E}^9$} 
\label{sec:8cube}

As discussed above, a proper coloring of the square of the $8$-cube, $Q_8^2$, corresponds to 
a partition of $\mathbb{E}^9$ into even codes with minimum distance at least $4$.
Let us now consider the distribution of code sizes in such a partition containing
13 codes. As the maximum size of a code is $A(8,3)=A(9,4)=20$, 
there are five different distributions:
\begin{itemize}
\item one code of size $16$, twelve of size $20$,
\item one code of size $17$, one of size $19$, eleven of size $20$,
\item two codes of size $18$, eleven of size $20$,
\item one code of size $18$, two of size $19$, ten of size $20$,
\item four codes of size $19$, nine of size $20$.
\end{itemize}

All attempts by the authors to exhaustively search for partitions of these types
failed, like in (unpublished) earlier studies. See also \cite{R08}.
The authors then decided to restrict the search to partitions with prescribed
automorphism groups, which turned out to be successful as we shall see.

Specifically, we search for, and classify up to isomorphism,
all admissible partitions $\mathcal{C}$ of $\mathbb{E}^9$ for which $|\Aut{\mathcal{C}}| \geq 3$. 
The case when one code is the doubly extended Hamming code of size $16$ leads
to many admissible partitions, and this case is also considered for $|\Aut{\mathcal{C}}| = 2$.

We shall next prove some results that provide the theoretical framework for our search.
Prescribing automorphism groups in the construction of combinatorial objects is a standard 
technique \cite[Section~9]{KO06}, but there are a few more details to take into account when considering 
sets of objects rather than single objects. For example, an automorphism of such a
set may map an object (here, code) onto itself or onto another object.

Theorem~\ref{thm:subgroup} below shows that for the problem at hand, 
all admissible partitions can be found by first fixing 
a code $C$ and a subgroup $H \leq \Aut{C}$ and then searching for all admissible partitions 
$\mathcal{C}$ for which $C \in \mathcal{C}$ and $H \leq \Aut{\mathcal{C}}$. 

\begin{lem} \label{lem:coprime}
Let $\mathcal{C}$ be a partition of\/ $\mathbb{E}^n$ that contains $N$ codes of size $M$. 
Let $H\leq \Aut{\mathcal{C}}$ such that $|H|$ is a power of a prime $p$. If $|H|$ and $N$ are 
coprime, then $H \leq \Aut{C}$ for a code $C$ of size $M$.
\end{lem}
\begin{proof}
Because $h C \in \mathcal{C}$ for each $C \in \mathcal{C}$ and $h \in H$, the set of codes 
of size $M$ is partitioned into orbits by $H$. The size of every orbit must divide $|H|$. 
Because $p$ does not divide $N$, at least one orbit has length $1$. Thus $H$ is a 
group of automorphisms of the single code in that orbit.
\end{proof}

\begin{thm} \label{thm:subgroup}
Let $\mathcal{C}$ be a partition of\/ $\mathbb{E}^9$ into $13$ codes of minimum distance at least $4$.
\begin{enumerate}[label=(\roman*)]
\item If\/ $\mathcal{C}$ contains the doubly extended Hamming code $C$ and $|\Aut{\mathcal{C}}|\geq 2$, 
then there is a subgroup $H \leq \Aut{\mathcal{C}}$ of prime order which is also a subgroup of $\Aut{C}$.
\item If\/ $|\Aut{\mathcal{C}}|\geq 3$, then there is a subgroup $H \leq \Aut{\mathcal{C}}$ whose order is $4$ 
or an odd prime so that $H\leq \Aut{C}$ for some code $C \in \mathcal{C}$.
\end{enumerate}
\end{thm}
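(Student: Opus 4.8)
The plan is to prove both parts by analyzing how a group of automorphisms of the partition $\mathcal{C}$ acts on its $13$ codes, and to extract a suitable subgroup whose prime-power order is coprime to an appropriate orbit count, so that Lemma~\ref{lem:coprime} applies. The key arithmetic fact I would exploit is that $13$ is prime and the code-size distribution is highly constrained: from Section~\ref{sec:8cube}, at most four codes have size strictly less than $20$, so there are at least nine codes of size $20$. Since $9$ is coprime to every prime $p \neq 3$ and to $4$, the orbits of size-$20$ codes will typically force a fixed code.

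For part (i), I would first take any element $g \in \Aut{\mathcal{C}}$ of prime order $p$, which exists since $|\Aut{\mathcal{C}}| \geq 2$. Here the distinguished code $C$ is the doubly extended Hamming code, the unique code of size $16$ in its partition (the only distribution with a size-$16$ code has exactly one such code). Since $g$ permutes the codes of $\mathcal{C}$ preserving size, it must fix the unique size-$16$ code setwise, i.e.\ $gC = C$, so $\langle g \rangle \leq \Aut{C}$; taking $H = \langle g \rangle$ gives a subgroup of prime order lying in both $\Aut{\mathcal{C}}$ and $\Aut{C}$.

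For part (ii), the argument is more delicate because no size-$16$ code need be present. Here I would pass to a suitable prime-power subgroup of $\Aut{\mathcal{C}}$ and apply Lemma~\ref{lem:coprime} with $N \geq 9$ the number of size-$20$ codes. If $|\Aut{\mathcal{C}}|$ has a prime factor $p \geq 5$, or $p = 3$ but with $3 \nmid N$, I take a subgroup $H$ of order $p$; since $p$ is coprime to $N$, Lemma~\ref{lem:coprime} yields $H \leq \Aut{C}$ for some size-$20$ code $C$, and $H$ has odd prime order as required. The awkward residual cases are when the only available primes are $2$ and $3$ and the coprimality can fail. When $|\Aut{\mathcal{C}}|$ is divisible by $4$, I would take a subgroup $H$ of order $4$ (a $2$-group, prime power $p = 2$), and since $N \geq 9$ is odd, $|H| = 4$ is coprime to $N$, so Lemma~\ref{lem:coprime} again delivers a fixed size-$20$ code. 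When $3 \mid N$ one must instead argue via the size distribution: the distributions where $3 \mid N$ (that is $N = 9$ or $N = 12$) must be checked so that some prime-power subgroup acts with coprime order on one of the size classes — for instance using a $2$-subgroup against the (odd total of) smaller codes, or a prime $p$ against a size class it cannot divide.

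The main obstacle will be the bookkeeping in part (ii) to ensure that for every admissible size distribution and every possible order of $\Aut{\mathcal{C}}$ with $|\Aut{\mathcal{C}}| \geq 3$, at least one of the permitted subgroup types (order $4$ or an odd prime) genuinely has order coprime to the size of some color class, so that Lemma~\ref{lem:coprime} can fire. Concretely, I expect the proof to reduce to: picking a prime $p$ dividing $|\Aut{\mathcal{C}}|$, and if $p$ is odd choosing a subgroup of order $p$ applied against a size class whose count is not divisible by $p$, while if the only prime available is $2$ using the hypothesis $|\Aut{\mathcal{C}}| \geq 3$ to guarantee $4 \mid |\Aut{\mathcal{C}}|$ and then applying the order-$4$ subgroup against the odd number $N \geq 9$ of size-$20$ codes. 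Verifying that these choices exhaust all cases — in particular handling $|\Aut{\mathcal{C}}|$ whose only prime divisors are $2$ and $3$ simultaneously — is the delicate step.
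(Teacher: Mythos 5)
Your part (i) is fine and matches the paper: the doubly extended Hamming code is the unique size-$16$ code in its (forced) distribution $1\times16+12\times20$, so any prime-order subgroup of $\Aut{\mathcal{C}}$ fixes it. Your part (ii), however, has genuine gaps, and they are not mere bookkeeping: two of your arithmetic claims are false, and the case analysis you defer as ``the delicate step'' is precisely the content of the proof. First, you claim that a prime factor $p\geq 5$ of $|\Aut{\mathcal{C}}|$ is automatically coprime to the number $N$ of size-$20$ codes; this fails for $p=5$ with the distribution $18+2\times19+10\times20$ (where $N=10$), and for $p=11$ with the distribution $2\times18+11\times20$ (where $N=11$) --- the latter case cannot be excluded without the observation, which you never make, that $|H|$ divides $|G_n|=9!\,2^8$ and hence is never divisible by $11$. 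Second, you assert that $N\geq 9$ is odd so that an order-$4$ subgroup is coprime to $N$; in fact $N\in\{12,11,11,10,9\}$ across the five distributions, so $N$ is even in two of them and your order-$4$ argument breaks exactly there.

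The fix, which is how the paper actually argues, is to stop insisting on the size-$20$ class as the target of Lemma~\ref{lem:coprime} and instead choose, per distribution and per type of $H$, a size class whose count is coprime to $|H|$. Four of the five distributions ($1\times16+12\times20$, $17+19+11\times20$, $18+2\times19+10\times20$, and $1\times16+12\times20$ again in part (i)) contain a class with exactly \emph{one} code (of size $16$, $17$, or $18$), against which every $|H|$ is trivially coprime --- this disposes of your problematic $p=5$, $N=10$ case at once. The distribution $2\times18+11\times20$ is handled by the class of $11$ codes together with $11\nmid|G_n|$. The distribution $4\times19+9\times20$ is the only one requiring a split on the type of $H$: if $|H|=4$, use the nine size-$20$ codes ($\gcd(4,9)=1$); if $|H|$ is an odd prime, use the four size-$19$ codes ($\gcd(p,4)=1$). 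Your proposal gestures at such moves (``a prime $p$ against a size class it cannot divide'') but never carries them out, and as written the main branch of your argument reaches false conclusions before the fallback would be invoked.
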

\begin{proof} ~

\begin{enumerate}[label=(\roman*)]
\item Because $\Aut{\mathcal{C}}$ is nontrivial, it necessarily contains a subgroup $H$ of 
prime order. Because $C$ is the only code of size $16$ in the partition, $H \leq \Aut{C}$ by 
Lemma~\ref{lem:coprime}. 

\item The group $\Aut{\mathcal{C}}$ has a subgroup $H$ of order $4$ or of order $p$ where $p$ 
is an odd prime. We have the following cases based on the size distribution of $\mathcal{C}$. 
We use Lemma~\ref{lem:coprime} in all cases.
\begin{itemize}
\item $1\times16 + 12\times20$: $H \leq \Aut{C}$ where $|C|=16$.
\item $1\times17 + 1\times19 + 11\times20$: $H \leq \Aut{C}$ where $|C|=17$. 
\item $2\times18 + 11\times20$: Because $H$ is a subgroup of $G_n$, $|H|$ divides $|G_n|=9!2^{8}$. 
Therefore $|H|$ and $11$ are coprime, so there is a code $C$ of size $20$ such that $H \leq \Aut{C}$.
\item $1\times18 + 2\times19 + 10\times20$: $H \leq \Aut{C}$ where $|C|=18$.
\item $4\times 19 + 9\times 20$: We get two cases:
\begin{itemize}
\item $|H|=4$: Because $9$ and $|H|$ are coprime, $H \leq \Aut{C}$ where $|C|=20$.
\item $|H|$ is an odd prime $p$: Because $4$ and $|H|$ are coprime, $H \leq \Aut{C}$ where $|C|=19$.
\end{itemize}
\end{itemize}
\end{enumerate}
\end{proof}

Using more precise language, every partition $\mathcal{C}$ that we wish to find occurs in 
a triple $(\mathcal{C}, C, H)$ for which $C \in \mathcal{C}$, $H \leq \Aut{\mathcal{C}}$ 
and $H \leq \Aut{C}$ where the sizes of $C$ and $H$ are as in one of the cases in the 
proof of Theorem~\ref{thm:subgroup}. 

Because we are eventually interested only in 
constructing nonisomorphic partitions $\mathcal{C}$, we can reduce the search space by 
the following observations. Theorem~\ref{thm:1} shows that it is enough to consider one 
candidate $C$ from each equivalence class. Theorem~\ref{thm:2} shows that for a given code $C$, 
it is enough to consider only one subgroup from each conjugacy class of subgroups of $\Aut{C}$.
\begin{thm}
\label{thm:1}
Let $\mathcal{C}$ be a partition of\/ $\mathbb{E}^n$, let $C$ be a code in $\mathcal{C}$, and 
let $H$ be a group for which $H \leq \Aut{\mathcal{C}}$ and $H \leq \Aut{C}$. Let $D$ be a 
code equivalent to $C$. Then $\mathcal{C}$ is isomorphic to a partition $\mathcal{D}$ for 
which $D \in \mathcal{D}$ and $H' \leq \Aut{\mathcal{D}}$ and $H' \leq \Aut{D}$ where $H'$
is a conjugate of $H$ in $G_n$.
\end{thm}
\begin{proof}
Let $g$ be an isomorphism for which $D = g C$. Let $\mathcal{D} = g \mathcal{C}$. Now 
$\mathcal{D}$ is a partition isomorphic to $\mathcal{C}$ and contains $D$. Finally, for each 
$h \in H$, we have $g h g^{-1} \mathcal{D} = g h \mathcal{C} = g \mathcal{C} = \mathcal{D}$ 
and $g h g^{-1} D = g h C = g C = D$, so $H'=g H g^{-1}$ has the required properties.
\end{proof}

\begin{thm}
\label{thm:2}
Let $\mathcal{C}$ be a partition of\/ $\mathbb{E}^n$, let $C$ be a code in $\mathcal{C}$, and 
let $H$ be a group for which $H \leq \Aut{\mathcal{C}}$ and $H \leq \Aut{C}$. 
Let $H'$ be a conjugate of $H$ in $\Aut{C}$.
Then $\mathcal{C}$ is isomorphic to a partition $\mathcal{D}$ for which 
$C \in \mathcal{D}$, $H' \leq \Aut{\mathcal{D}}$, and $H' \leq \Aut{C}$.
\end{thm}
\begin{proof}
Let $g\in\Aut{C}$ such that $H'=gHg^{-1}$. 
Let $\mathcal{D} = g \mathcal{C}$. Now $C = g C \in \mathcal{D}$, and, for each $h \in H$, 
we have $g h g^{-1} \mathcal{D} = g h \mathcal{C} = g \mathcal{C} = \mathcal{D}$, so $g H g^{-1} \leq \Aut{\mathcal{D}}$. Therefore $\mathcal{D} = g \mathcal{C}$ satisfies the conditions. 
\end{proof}

\section{Computational classification} \label{sec:computational}

\subsection{Algorithm} \label{sec:algorithm}

Before the main search, the authors classified the even $(9,M,4)$ codes for 
$16 \leq M \leq 20$; the number of equivalence classes is 343566, 41499, 2041, 33, 
and 2, respectively. This classification was carried out and validated with 
software developed for \cite{O11}; some of these codes were classified already 
in \cite{OBK99}. The automorphism groups of the codes can be obtained as a by-product
of this classification or by separately using a standard transformation
to a colored graph \cite{OBK99} (see also \cite[pp.~86--87]{KO06}) which is fed
to the graph isomorphism software \emph{nauty} \cite{MP14}. We use the notation
$\mathcal{C}_M$ for a set of representatives of the equivalence classes of 
even $(9,M,4)$ codes.

The main idea of the search algorithm is to start by fixing a code $C$ in the partition
and a group $H$ that is a subgroup of the automorphism groups of $C$ and the partition.
The other codes in the partition are divided into orbits by $H$, so the search proceeds
by finding possible orbits and combining them into partitions of $\mathbb{E}^n$.

The search algorithm is given as Algorithm~\ref{alg:search} in pseudocode. 
The search is carried out by calling $\textproc{Search}(M, N_1, M_1, N_2, M_2)$
for each of the six possible cases regarding size distributions of codes
and choice of the code size $|C|$ particularized in the proof of Theorem~\ref{thm:subgroup}.
The parameters of the call are
as follows. The value of $M$ is the size of the particularized code $C$ in
the proof of Theorem~\ref{thm:subgroup}. Disregarding $C$, there are
one or two sizes for the remaining codes. Let $N_1$ be the number of codes of 
size $M_1$ and $N_2$ the number of codes of size $M_2$, where $0 \leq N_1 \leq N_2$ 
(so $N_1=0$ if there is only one size of remaining codes; then $M_1$ is 
undefined) and $N_1 + N_2 = 12$. 

\begin{algorithm}[htbp]
\caption{Main search procedure} \label{alg:search}
\begin{algorithmic}
\Function{FindOrbits}{$C$: code, $N$, $M$: integers, $H$: group}
\State $\mathcal{S} \gets \emptyset$
\ForAll{$C' \in \mathcal{C}_{M}$}
\ForAll{$g \in G_n$}
\State $\mathcal{O} \gets\emptyset$
\ForAll{$h \in H$}
\State Insert $hgC'$ into $\mathcal{O}$
\EndFor
\If {($|\mathcal{O}| \leq N$ \textbf{and}
\State elements in $\mathcal{O}$ disjoint from $C$ \textbf{and}
\State elements in $\mathcal{O}$ disjoint from each other)
}
\State Insert $\mathcal{O}$ into $\mathcal{S}$
\EndIf
\EndFor
\EndFor
\Return $\mathcal{S}$
\EndFunction
\Procedure{Search}{$M$, $N_1$, $M_1$, $N_2$, $M_2$: integers}
\ForAll{$C \in \mathcal{C}_M$}
\ForAll{$H \in \textproc{NonconjugateSubgroups}(\Aut{C})$}
\State $\mathcal{S}_1 \gets \textproc{FindOrbits}(C, N_1, M_1, H)$
\State $\mathcal{S}_2 \gets \textproc{FindOrbits}(C, N_2, M_2, H)$
\ForAll{$S_1 \in \textproc{Pack}(\mathbb{E}^9\setminus C, \mathcal{S}_1, N_1)$}
\ForAll{$S_2 \in \textproc{Exact}(\mathbb{E}^9\setminus (C \cup \bigcup_{\mathcal{O} \in S_1} \bigcup_{C' \in \mathcal{O}} C'), \mathcal{S}_2)$}
\State Report $\{C\} \cup \left(\bigcup_{\mathcal{O} \in S_1} \mathcal{O} \right) \cup \left(\bigcup_{\mathcal{O} \in S_2} \mathcal{O} \right)$
\EndFor
\EndFor
\EndFor
\EndFor
\EndProcedure
\end{algorithmic}
\end{algorithm}

The following subroutines are called from the search algorithm. We use the notation
$\mathcal{P}(X)$, where $X$ is a set, for the set of all subsets of $X$. 

\vspace*{5mm}
\noindent
$\textproc{Pack}(X, \mathcal{S}, N)$, where $X$ is a set, 
$\mathcal{S} \subseteq \mathcal{P}(\mathcal{P}(X))$, and $N$ is an integer: Finds all subsets $S$ of 
$\mathcal{S}$ where each element of $X$ appears \emph{at most} once and 
$\sum_{\mathcal{O} \in S} |\mathcal{O}| = N$, and returns the set of all such sets $S$.

\noindent
$\textproc{Exact}(X, \mathcal{S})$, where $X$ is a set and 
$\mathcal{S} \subseteq \mathcal{P}(\mathcal{P}(Y))$ for some $Y \supseteq X$: Finds all subsets $S$ of $\mathcal{S} \cap \mathcal{P}(\mathcal{P}(X))$ so that
each element of $X$ appears \emph{exactly} once in $S$, and returns the set of all such sets $S$.

\noindent
$\textproc{NonconjugateSubgroups}(G)$ returns a set containing one representative from each 
conjugacy class of subgroups of $G$ of order $4$ or odd prime ($2$ or odd prime when $C$ is 
the doubly extended Hamming code).

\vspace*{5mm}
The first routine essentially finds cliques in a graph with vertices for sets of
words and edges whenever the corresponding sets are nonintersecting. In
this work a tailored backtrack algorithm was used due to the large number 
of vertices in the corresponding graph. 
For the last two routines, one may use the \emph{libexact} software \cite{KP08} and any
computer algebra software (actually, the groups are so small that even brute force
search performs well), respectively. 

Let $H \leq \Aut{C}$ for a prescribed code $C$.
To find all admissible partitions of $\mathbb{E}^n \setminus C$ into codes with the 
given size distribution that are divided into orbits by $H$, we search for sets 
$\{\mathcal{O}_1,\mathcal{O}_2,\dots,\mathcal{O}_k\}$ for which each 
$\mathcal{O}_i$ is an orbit of a code under $H$ and $\bigcup_i \mathcal{O}_i$ is an admissible 
partition of $\mathbb{E}^n \setminus C$. The algorithm does this by first finding the orbits 
of codes of size $M_1$ and then finding the orbits of codes of size $M_2$. In the search for
an admissible partition, one needs to make sure that the codes are nonintersecting. 
When searching for the orbits of codes
of size $M_2$, the additional requirement that all words should be included into some code is 
beneficial for the search; compare the difference between the routines $\textproc{Pack}$ and 
$\textproc{Exact}$.
In principle, the calls to $\textproc{Pack}$ and $\textproc{Exact}$ could be combined into
one call to $\textproc{Exact}$ but the current approach saves memory and enables more efficient 
parallelization.

Orbits of codes are produced by the function $\textproc{FindOrbits}$. This function finds
all orbits where the codes are pairwise disjoint and disjoint from $C$. A naive method
is here sufficient, looping over representatives $C'$ of all equivalence classes of codes 
of the given size and over all elements $g\in G_n$ to get codes $g C'$. Codes with nontrivial 
automorphism groups are then reported more than once, but the duplicate orbits can be removed 
afterwards.

Once the entire search is ready, isomorphic partitions are rejected and 
the automorphism group orders are determined for all solutions. One may consider the partitions 
as colorings of the graph $Q_{n-1}^2$ and use \emph{nauty} for those graphs. Handling colorings 
with indistinguishable colors is described in the \emph{nauty} manual.

\subsection{Results} 
\label{sec:results}

The search for admissible partitions with automorphism group order at least $2$ containing 
the doubly extended Hamming code yielded $2266$ nonisomorphic partitions. Out of these, 
$266$ have an automorphism group of order $2$ and the other $2000$ have an automorphism group 
of order $4$. The search required $5650$ days of CPU time on a single core of 
Intel Core i7 870 processor. CPU times reported later are for a single core of that
processor. The computations were carried out in a computer cluster.

For other cases, the numbers of partitions found are shown in Table~\ref{tab:results} along 
with the required CPU time, grouped by the size distribution and the size of the initial code 
$C$ in the search. Note that the line corresponding to the distribution $16 + 12 \times 20$ 
does not include the search starting from the doubly extended Hamming code. The two separate 
cases with size distribution $4\times 19 + 9 \times 20$ yielded no common partitions.

\begin{table}
\begin{center}
\begin{tabular}{rrrr}
Size distribution & $|C|$ & \# & CPU time \\
\hline
$16 + 12 \times 20$ & $16$ & $125$ & $128$ days\\
$17 + 19 + 11 \times 20$ & $17$ & $0$ & $162$ days \\
$2 \times 18 + 11 \times 20$ & $20$ & $5$ & $291$ hours\\
$18 + 2 \times 19 + 10 \times 20$ & $18$ & $0$ & $66$ hours\\
$4 \times 19 + 9 \times 20$ & $19$ & $1$ & $42$ days\\
$4 \times 19 + 9 \times 20$ & $20$ & $5$ & $32$ hours\\
\end{tabular}
\caption{Number of partitions}
\label{tab:results}
\end{center}
\end{table}

The number of partitions with each automorphism group order at least $3$ are listed 
in Table~\ref{tab:automs}. In addition, there are $266$ partitions that have automorphism 
group of order $2$ where one code is the doubly extended Hamming code.

\begin{table}
\begin{center}
\begin{tabular}{rr}
$|\Aut{\mathcal{C}}|$ & $\#$ \\
\hline
3 &      1 \\
4 &  2099 \\
6 &      5 \\
8 &    25 \\
9 &      1 \\
12 &      2 \\
24 &      1 \\
48 &      2
\end{tabular}
\caption{Automorphism group orders}
\label{tab:automs}
\end{center}
\end{table}

Two of the partitions found, one with distribution $2 \times 18 + 11 \times 20$ and one 
with distribution $4 \times 19 + 9 \times 20$, contain codes that are not 
maximal. Augmenting these codes yield five new nonisomorphic partitions in 
total, two with trivial automorphism group, which have distributions 
$18 + 2 \times 19 + 10 \times 20$ and $4 \times 19 + 10 \times 20$, and three with 
automorphism group order $2$, one of which have distribution 
$18 + 2 \times 19 + 10 \times 20$ and two of which have distribution $4 \times 19 + 10 \times 20$.

We present here a partition with distribution $16+12\times 20$ that has an 
automorphism group of order $48$. Because all codes of size $20$ in this partition 
lie on the same orbit under the automorphism group, it suffices to 
list an even $(9,16,4)$ code $C_0$, an even $(9,20,4)$ code $C_1$,
and two isomorphisms $g_1$, $g_2$ that generate the automorphism group.
An isomorphism $g$ is given as a pair $(\pi,c)$, where $\pi$ 
is a permutation of $\{1,2,\dots,9\}$ and $c$ is a word in $\mathbb{E}^9$ such that 
$g$ maps each word $c' \in \mathbb{E}^9$ to a word that has $c'_{\pi^{-1}(i)} \oplus c_i$ 
at the $i$th coordinate for each $i$.
\begin{align*}
C_0 = \{&000000000,
      000011011,
      100100101,
      100111110,
      101001010, 
      101010111,\\
      &001101001,
      001110100,
      110010001,
      010011100,
      110100010,
      010101111, \\
      &011000110,
      111001101,
      011110011,
      111111000\}, \\
C_1 = \{&000000011,
      100001101,
      100011010,
      100110100,
      000111001,
      101000110,\\
      &001010101,
      101101000,
      001101111,
      101110011,
      010010000,
      110010111,\\
      &010100101,
      110101011,
      010111110,
      111000001,
      011001100,
      011011011,\\
      &011100010,
      111111101\},\\
g_1 = (&(23)(47)(68),100100101), \\
g_2 = (&(1857)(29)(46),000011011).
\end{align*}
    
This result gives an infinite family of colorings of $Q_n^2$.

\begin{thm}
\label{thm:infinite}
$\chi_{\bar{2}}(9 \cdot 2^{i}-1) \leq 13\cdot 2^i$ for $i \geq 0$. 
\end{thm}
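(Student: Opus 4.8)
The plan is to bootstrap from the single $13$-coloring of $Q_8^2$ (equivalently, the partition of $\mathbb{E}^9$ into $13$ even codes of minimum distance at least $4$) via a doubling construction on the underlying Hamming space. The claim $\chi_{\bar{2}}(9\cdot2^i-1)\leq 13\cdot2^i$ is an induction on $i$, with the exhibited partition of $\mathbb{E}^9$ serving as the base case $i=0$: it gives $\chi_{\bar{2}}(8)\leq13$, and $8=9\cdot2^0-1$. The heart of the argument is the inductive step, which should be a general lemma of the form: if $\mathbb{E}^{m}$ can be partitioned into $t$ even codes of minimum distance at least $4$, then $\mathbb{E}^{2m}$ can be partitioned into $2t$ such codes. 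Applying this with $m=9\cdot2^i$ (so the length $2m=9\cdot2^{i+1}$, corresponding to the square of the $(9\cdot2^{i+1}-1)$-cube) and $t=13\cdot2^i$ propagates the bound.

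First I would set up the doubling lemma explicitly. Given a partition $\mathbb{E}^m=C_1\cup\cdots\cup C_t$ into minimum-distance-$4$ even codes, I would build codes in $\mathbb{Z}_2^{2m}$ (or $\mathbb{E}^{2m}$) using a Plotkin-type $(u\mid u+v)$ construction: from each pair of color classes form new classes of the shape $\{(u\mid u+v):u\in A,\ v\in B\}$ for suitable choices of $A,B$ among the original codes and their translates. The key point to verify is that the minimum distance of each resulting code is still at least $4$. With the $(u\mid u+v)$ recipe, two distinct codewords $(u_1\mid u_1+v_1)$ and $(u_2\mid u_2+v_2)$ differ by $d(u_1,u_2)$ in the first block and by $d(u_1+v_1,u_2+v_2)$ in the second; standard Plotkin bookkeeping gives total distance at least $\min\bigl(2\,d(u_1,u_2),\ d(u_1,u_2)+d(v_1,v_2)\bigr)$ in the relevant cases, which is $\geq4$ as soon as the ingredient codes have minimum distance $\geq4$ (and the diagonal $v_1=v_2$, $u_1\neq u_2$ case contributes $2d(u_1,u_2)\geq 4$). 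One must also check evenness is preserved, which it is since the $(u\mid u+v)$ map sends even-weight pairs to even weight, and confirm that the new classes tile $\mathbb{E}^{2m}$ exactly with no overlaps and no omissions — this is a counting/bijection check, using that the original $C_i$ partition $\mathbb{E}^m$.

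I expect the main obstacle to be making the doubling produce exactly $2t$ classes while keeping every class a genuine distance-$4$ even code, rather than a larger number of smaller pieces. Equivalently, one must arrange the $(u\mid u+v)$ blocks so that each new code is as large as possible and the bookkeeping closes up to give the factor of $2$ (not, say, a factor of $t$). An alternative framing that avoids this bookkeeping entirely is to argue at the level of colorings directly: a $c$-coloring of $Q_m^2$ together with the generic doubling for squares of cubes yields a $2c$-coloring of $Q_{2m+1}^2$, since $Q_{2m+1}^2\cong\tfrac12Q_{2m+2}$ and the halved-cube structure doubles cleanly. I would likely state the general doubling as a standalone lemma, prove the distance and partition claims once, and then obtain Theorem~\ref{thm:infinite} by feeding in the base coloring with $c=13$, $m=9$ and iterating, reading off $\chi_{\bar2}(9\cdot2^i-1)\leq13\cdot2^i$ at each stage.
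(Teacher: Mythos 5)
Your high-level plan coincides with the paper's: the paper proves this theorem in one line, by combining the new result $\chi_{\bar{2}}(8)=13$ with the doubling bound $\chi_{\bar{2}}(2n+1)\leq 2\chi_{\bar{2}}(n)$ of \cite[Theorem~1]{O04} and inducting on $i$ (the paper's text prints the bound without the factor $2$, evidently a typo, since as printed it would contradict the paper's own remark that $\chi_{\bar{2}}(17)\geq 21$). So everything in your proposal beyond what the paper does is your attempt to prove that doubling lemma from scratch --- and that attempt has a genuine gap, precisely the one you flag as ``the main obstacle'' and then leave unresolved.

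Concretely: if the two ingredients $A$ and $B$ of your $(u\mid u+v)$ construction are taken ``among the original codes and their translates,'' then each new class $\{(u\mid u+v): u\in A,\ v\in B\}$ is indexed by a \emph{pair} of classes, and covering $\mathbb{E}^{2m}$ forces you to use essentially all pairs: the classes with $u$ even tile the set of words whose two halves are both even only when $(A,B)$ runs over all $t^2$ pairs (the sizes $|A||B|$ then sum to $2^{m-1}\cdot 2^{m-1}$, exactly the size of that half), and another $t^2$ classes with $u$ odd are needed for the other half. That yields $\chi_{\bar{2}}(2m-1)\leq 2t^2$, which is useless here: $338$ instead of $26$ at the first step, and worse after iterating. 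The missing idea is that the first ingredient must \emph{not} be a minimum-distance-$4$ code at all, but a whole parity class: take $D_i=\{(u\mid u+v): u\in\mathbb{E}^m,\ v\in C_i\}$ and $D_i'=\{(u\mid u+v): u\in\mathbb{O}^m,\ v\in C_i\}$ for $i=1,\dots,t$. Since a word $(a\mid b)$ lies in $\mathbb{E}^{2m}$ exactly when $a+b\in\mathbb{E}^m$, the map $(u,v)\mapsto(u\mid u+v)$ is a bijection from $\mathbb{Z}_2^m\times\mathbb{E}^m$ onto $\mathbb{E}^{2m}$, so these $2t$ sets tile $\mathbb{E}^{2m}$. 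The distance check is then the one you wrote, but it genuinely needs parity rather than your assumption that the ingredients have distance at least $4$: when $v_1=v_2$ and $u_1\neq u_2$ the distance is $2d(u_1,u_2)\geq 4$ because distinct words of equal parity differ in at least two coordinates, and when $v_1\neq v_2$ it is at least $d(u_1,u_2)+\lvert d(v_1,v_2)-d(u_1,u_2)\rvert\geq d(v_1,v_2)\geq 4$. (Your closing ``alternative framing'' --- that the halved-cube structure ``doubles cleanly'' --- is not an argument; it merely restates the lemma to be proved.) With this choice of ingredients fixed, your induction goes through verbatim.
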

\begin{proof}
The result follows from $\chi_{\bar{2}}(8) = 13$ and
the bound $\chi_{\bar{2}}(2n+1) \leq \chi_{\bar{2}}(n)$ \mbox{\cite[Theorem~1]{O04}.}
\end{proof}

For example, Theorem~\ref{thm:infinite} gives that $\chi_{\bar{2}}(17) \leq 26$, 
but we are not able to determine the exact chromatic number in that 
case. By $5632 \leq A(17,3) \leq 6552$ \cite{LO16,BBMOS78} and \eqref{eq:lower}, we
know that $\chi_{\bar{2}}(17) \geq 21$, and finding better bounds for $A(17,3)$
would not be able to improve the bound given by \eqref{eq:lower} beyond 24. 

\subsection{Double counting} 
\label{sec:doublecount}

To increase confidence in the computational results, we perform a consistency check by 
double counting. The counting is done separately for every size distribution of 
$\mathcal{C}$ and size of the code $C$ listed in the proof of Theorem~\ref{thm:subgroup}. 
We find the number of triples $(\mathcal{C}, C, H)$ where $\mathcal{C}$ is an admissible 
partition of $\mathbb{E}^n$ with the given size distribution, $C$ is a code in $\mathcal{C}$ 
of the given size, $H$ is a group that is a subgroup of $\Aut{\mathcal{C}}$ and $\Aut{C}$, 
and $|H|$ is one of the possibilities in the proof of Theorem~\ref{thm:subgroup}. We obtain 
this number in two ways.

The first way is as follows. For each partition $\mathcal{C}$, let $N(\mathcal{C})$ be the 
number of pairs $(C, H)$ such that $(\mathcal{C}, C, H)$ is a triple to be counted. This can 
be found computationally by looping over all subgroups $H$ of $\Aut{\mathcal{C}}$ of admissible 
order and every code $C$ of the fixed size and checking whether $H \leq \Aut{C}$. Because 
$N(\mathcal{C})=N(\mathcal{D})$ when $\mathcal{C}$ and $\mathcal{D}$ are isomorphic, and 
the number of partitions isomorphic to $\mathcal{C}$ is $|G_n|/|\Aut{\mathcal{C}}|$, the count 
can be obtained by
\[
\sum_\mathcal{C} N(\mathcal{C}) \frac{|G_n|}{|\Aut{\mathcal{C}}|},
\]
where the sum is taken over equivalence class representatives of colorings that are found in the search.

On the other hand, for each pair $(C, H)$ where $C$ is a code of the fixed size and 
$H \leq \Aut{\mathcal{C}}$ is of admissible size, let $N(C,H)$ be the number of colorings for 
which $(\mathcal{C}, C, H)$ is a triple to be counted. This is the number of colorings found 
in the search starting from $C$ and $H$. Because $N(C,H) = N(g C, g H g^{-1})$ for every 
$g \in G_n$, the count can be obtained by looking at only one $C$ from each equivalence class 
of codes and only one $H$ from each conjugacy class of subgroups of $\Aut{C}$. As this is exactly 
what is done in the search, the count can be obtained computationally by
\[
\sum_{C, H} N(C, H)  X(C,H) \frac{|G_n|}{|\Aut{C}|},
\]
where $X(C,H)$ is the number of subgroups of $\Aut{C}$ conjugate to $H$ and the sum is taken over 
all pairs $(C,H)$ for which the search was performed. To this end, the numbers $X(C,H)$ and $N(C,H)$ 
are stored during the search.

\section{Extending colorings}
\label{sec:extend}

In an attempt to find a $13$-coloring of $Q_{9}^2$, one may check whether the classified
$13$-colorings of $Q_8^2$ can occur as a subgraph of such a coloring. Consider an admissible partition 
$\mathcal{C}=\{C_1,C_2,\dots,C_{13}\}$ of $\mathbb{E}^{10}$. Each code $C_i$ can be written as 
$C_i = 0D_i \cup 1E_i$, where $D_i$ is an even-weight code of length $9$ with minimum distance 
$4$ and $E_i$ is an odd-weight code of length $9$ with minimum distance $4$. Now 
$\mathcal{D}=\{D_1,D_2,\dots,D_{13}\}$ is an admissible partition of $\mathbb{E}^{9}$ and 
$\mathcal{E}=\{E_1,E_2,\dots,E_{13}\}$ also corresponds to an admissible partition of 
$\mathbb{E}^9$ if for example the last bit is complemented in each codeword.

Given an admissible partition $\mathcal{D}=\{D_1,D_2,\dots,D_{13}\}$ of $\mathbb{E}^9$, we are to 
determine whether it can be extended to a coloring of $\mathbb{E}^{10}$ in the way described 
above. 
The number of ways to express $256$, the size of $\mathbb{E}^9$, as a sum of $13$ integers 
smaller than or equal to $20$ equals the number of ways to express $13 \times 20 - 256 = 4$ 
as a sum of $13$ nonnegative integers, which is $1820$. Therefore, there are $1820$ 
possible choices for the sizes of the codes in $\mathcal{E}$ when the order matters. 
The algorithm now runs as follows. In Steps 1 and 2 of a complete search, representatives of 
all classified colorings and all possible choices of sizes $M_i$ are considered,
respectively.
\begin{enumerate}
\item Consider an admissible partition 
$\mathcal{D}=\{D_1,D_2,\dots,D_{13}\}$ of $\mathbb{E}^9$.
\item Fix the sizes of the codes in $\mathcal{E}$, denoted by $(M_1,\dots,M_{13})$, 
where $M_i\leq 20$ and $\sum_i M_i = 256$.
\item For all $i=1,\dots,13$, find all possible codes $E_i$ such that $|E_i|=M_i$ and 
$C_i = 0D_i \cup 1E_i$ has minimum distance $4$. 
\item Find a partition of $\mathbb{E}^{10}$ from all possible sets $0D_i \cup 1E_i$.
\end{enumerate}

The task in Step 3 can be formulated in the framework of  clique search: starting from 
$\mathbb{O}^{9}$, we remove the words that have distance less than $3$ to a word in $D_i$ 
(because having such a codeword in $E_i$ would result to a pair of codewords in $C_i$ that 
would have distance less than $4$) and consider the graph over the remaining codewords that 
has an edge between each pair of codewords with Hamming distance at least $4$. Now a possible 
code $E_i$ corresponds to a clique of the size $M_i$ in that graph. To search for cliques in 
the graph, we use the software Cliquer \cite{NO03}.

The task in Step~4 is an instance of the \emph{exact cover problem}: Given a set $X$
and a family $\mathcal{S}$ of subsets of $X$, enumerate all subsets of $\mathcal{S}$ that contain
each element of $X$ exactly once. We use \emph{libexact} to solve the instances.
Actually, it suffices to let 
$X = \{1,2,\ldots ,13\} \cup \mathbb{O}^{9}$ with all possible sets $\{i\} \cup E_i$ in $\mathcal{S}$.

None of the known partitions of $\mathbb{E}^9$ could be extended to an admissible partition 
of $\mathbb{E}^{10}$ containing $13$ codes. The search required 146 hours of CPU time.


\begin{thebibliography}{00}

\bibitem{B80}
M.~Best, Binary codes with a minimum distance of four, 
IEEE Trans. Inform. Theory 26 (1980) 738--742.
\newblock \href {http://dx.doi.org/10.1109/TIT.1980.1056269}
  {\path{doi:10.1109/TIT.1980.1056269}}.

\bibitem{BB77}
M.~R.~Best, A.~E.~Brouwer, The triply shortened binary Hamming code is optimal,
  Discrete Math. 17 (1977) 235--245.
\newblock \href {http://dx.doi.org/10.1016/0012-365X(77)90158-3}
  {\path{doi:10.1016/0012-365X(77)90158-3}}.

\bibitem{BBMOS78}
M.~R.~Best, A.~E.~Brouwer, F.~J. MacWilliams, A.~M. Odlyzko, N.~J.~A. Sloane, Bounds
  for binary codes of length less than 25, 
IEEE Trans. Inform. Theory 24 (1978) 81--93.
\newblock \href {http://dx.doi.org/10.1109/TIT.1978.1055827}
  {\path{doi:10.1109/TIT.1978.1055827}}.

\bibitem{DHLL90}
T.~Dvo\v{r}\'{a}k, I.~Havel, J.-M.~Laborde, P.~Liebl, Generalized hypercubes 
and graph embedding with dilation. 
Rostock. Math. Kolloq. No. 39 (1990), 13--20.

\bibitem{H75}
F.~Harary, Four difficult unsolved problems in graph theory,
in: Recent advances in graph theory (Proc. Second Czechoslovak Sympos., Prague, 1974), 
Academia, Prague, 1975, pp. 249--256.

\bibitem{JT95}
T.~R. Jensen, B.~Toft, Graph Coloring Problems, Wiley, New York, 1995.

\bibitem{KO06}
P.~Kaski, P.~R.~J.~{\"{O}}sterg{\aa}rd,
Classification Algorithms for Codes and Designs,
Springer, Berlin, 2006.

\bibitem{KP08}
P.~Kaski, O.~Pottonen, {libexact user{'}s guide, version 1.0}, Tech. Rep. {TR
2008-1}, {Helsinki Institute for Information Technology HIIT}, {Helsinki}
({2008}).

\bibitem{KDP00}
D.~S. Kim, D.-Z. Du, P.~M. Pardalos, A coloring problem on the $n$-cube, 
Discrete Appl. Math. 103 (2000) 307--311.
\newblock \href {http://dx.doi.org/10.1016/S0166-218X(99)00249-8}
  {\path{doi:10.1016/S0166-218X(99)00249-8}}.

\bibitem{LO16}
A.~Laaksonen, P.~R.~J.~{\"{O}}sterg{\aa}rd, { Constructing error-correcting binary codes 
using transitive permutation groups}, ArXiv e-prints \href {http://arxiv.org/abs/1604.06022} 
{\path{arXiv:1604.06022}}.

\bibitem{L16}
J.~{Lauri}, {The square of the 9-hypercube is 14-colorable}, ArXiv
  e-prints \href {http://arxiv.org/abs/1605.07613} {\path{arXiv:1605.07613}}.

\bibitem{LMT88}
N.~Linial, R.~Meshulam, M.~Tarsi, Matroidal bijections between graphs, 
J. Combin. Theory Ser. B 45 (1988) 31--44.
\newblock \href {http://dx.doi.org/10.1016/0095-8956(88)90053-6}
  {\path{doi:10.1016/0095-8956(88)90053-6}}.

\bibitem{MP14}
B.~D. McKay, A.~Piperno, Practical graph isomorphism, {II}, 
J. Symbolic Comput. 60 (2014) 94--112.
\newblock \href {http://dx.doi.org/10.1016/j.jsc.2013.09.003}
  {\path{doi:10.1016/j.jsc.2013.09.003}}.

\bibitem{NDG02}
H.~Q. Ngo, D.-Z. Du, R.~L. Graham, New bounds on a hypercube coloring problem,
  Inform. Process. Lett. 84 (2002) 265--269.
\newblock \href {http://dx.doi.org/10.1016/S0020-0190(02)00301-0}
  {\path{doi:10.1016/S0020-0190(02)00301-0}}.

\bibitem{NO03}
S.~Niskanen, P.~R.~J. {\"{O}}sterg{\aa}rd, {Cliquer User{'}s Guide, Version 1.0},
  Tech. Rep. {T48}, {Communications Laboratory, Helsinki University of
  Technology}, {Espoo} ({2003}).

\bibitem{O04}
P.~R.~J. {\"{O}}sterg{\aa}rd, On a hypercube coloring problem, 
J. Combin. Theory Ser. A 108 (2004) 199--204.
\newblock \href {http://dx.doi.org/10.1016/j.jcta.2004.06.010}
  {\path{doi:10.1016/j.jcta.2004.06.010}}.

\bibitem{O11}
P.~R.~J. \"{O}sterg{\aa}rd, 
On the size of optimal three-error-correcting binary codes of 
length 16, 
IEEE Trans. Inform. Theory 57 (2011) 6824--6826.
\newblock \href {http://dx.doi.org/10.1109/TIT.2011.2144955}
  {\path{doi:10.1109/TIT.2011.2144955}}.

\bibitem{OBK99}
P.~R.~J. \"{O}sterg{\aa}rd, T.~Baicheva, E.~Kolev, Optimal binary
  one-error-correcting codes of length 10 have 72 codewords, IEEE Trans.
  Inform. Theory 45 (1999) 1229--1231.
\newblock \href {http://dx.doi.org/10.1109/18.761273}
  {\path{doi:10.1109/18.761273}}.

\bibitem{P92}
C.~Payan, On the chromatic number of cube-like graphs, Discrete Math. 103
  (1992) 271--277.
\newblock \href {http://dx.doi.org/10.1016/0012-365X(92)90319-B}
  {\path{doi:10.1016/0012-365X(92)90319-B}}.

\bibitem{R08}
J.~G.~Rix, Hypercube coloring and the structure of binary codes,
Master's thesis, The University of British Columbia (2008).

\bibitem{W97}
P.-J. Wan, Near-optimal conflict-free channel set assignments for an optical
  cluster-based hypercube network, J. Comb. Optim. 1
  (1997) 179--186.
\newblock \href {http://dx.doi.org/10.1023/A:1009759916586}
  {\path{doi:10.1023/A:1009759916586}}.

\bibitem{Z01}
G.~M. Ziegler, 
Coloring {H}amming graphs, optimal binary codes, and the 0/1-{B}orsuk problem in low dimensions,
in: H.~Alt (Ed.), Computational
  Discrete Mathematics, Springer, Berlin, 2001, pp. 159--171.
\newblock \href {http://dx.doi.org/10.1007/3-540-45506-X_12}
  {\path{doi:10.1007/3-540-45506-X_12}}.

\end{thebibliography}
\end{document}